\newtheorem{theorem}{Theorem}[section]
\newtheorem{corollary}{Corollary}[section]
\newtheorem{lemma}{Lemma}[subsection]
\newtheorem{definition}{Definition}[section]
\newtheorem{proposition}{Proposition}[subsection]
\newtheorem{example}{Example}[subsection]
\newtheorem{remark}{Remark}[section]
\begin{document}
\title{Multi-marginal optimal transportation problem for cyclic costs\thanks{BP is pleased
		to acknowledge support from Natural Sciences and Engineering Research
		Council of Canada Grant 04658-2018. The work of AVJ was completed in partial fulfillment of the requirements for a doctoral degree in mathematics at the University of Alberta.}}
\author{BRENDAN PASS \thanks{%
		Department of Mathematical and Statistical Sciences, University of Alberta,
		Edmonton, Alberta, Canada pass@ualberta.ca.}\\	 
	\and 
	ADOLFO VARGAS-JIM\'{E}NEZ\thanks{%
		Department of Mathematical and Statistical Sciences, University of Alberta,
		Edmonton, Alberta, Canada vargasji@ualberta.ca.}}
\date{\today}
\maketitle
\begin{abstract}
We study a multi-marginal optimal transportation problem with a cost function of the form $c(x_{1}, \ldots,x_{m})=\sum_{k=1}^{m-1}|x_{k}-x_{k+1}|^{2} + |x_{m}- F(x_{1})|^{2}$, where $F: \mathbb{R}^n \rightarrow \mathbb{R}^n$. 
 When $m=4$, $F$ is a positive multiple of the identity mapping, and the first and last marginals are absolutely continuous with respect to Lebesgue measure, we establish that any solution of the Kantorovich problem is induced by a map; the solution is therefore unique.  We go on to show that this result is sharp in a certain sense.  Precisely, we exhibit examples showing that Kantorovich solutions may concentrate on a higher dimensional sets if any of the following hold: 1) $F$ is any linear mapping other than a positive scalar multiple of the identity, 2) the last marginal is not absolutely continuous with respect to Lebesgue measure, or 3) the number of marginals	$m \geq 5$, even when $F$ is the identity mapping.
	
\end{abstract}

\section{INTRODUCTION}

Let $\mu_{k}\in P(X_{k})$ be Borel probability measures, where $X_{k}\subseteq \mathbb{R}^{n}$ are bounded domains and $k=1,\ldots, m$. For a real-valued cost function $c$ on the product space $X := X_{1}\times X_{2}\times \ldots \times X_{m}$, the multi-marginal version of Kantorovich's optimal transportation problem is to solve:
\begin{equation}\label{KP}
   \inf \bigg \{\mathcal{F}_{c}[\mu]: \mu \in \Pi(\mu_{1},\ldots , \mu_{m})\bigg\} \tag{KP}
  \end{equation}
 where
 \begin{flalign*}
\Pi(\mu_{1},\ldots , \mu_{m})&= \Big\{  \mu \in P(X): \mu(X_{1}\times \ldots \times A_{k} \times \ldots \times X_{m})=\mu_{k} (A_{k})\\
 &\qquad\quad\qquad\qquad\text{ for all measurable sets } A_{k}\subseteq X_{k},\quad 1\leq k\leq m \Big\} 
 \end{flalign*}
 and $\mathcal{F}_{c}[\mu]=\displaystyle \int_{X} c(x_{1},\ldots, x_{m})d\mu.$\\
 The Monge formulation of the same problem is to solve:
 \begin{equation}\label{MP}
   \inf \bigg \{\mathcal{G}[(T_{1},T_{2},\ldots, T_{m})]: \{T_{k}\}_{k=1}^{m}\in\Gamma(\mu_{1},\ldots , \mu_{m})\bigg\}\tag{MP}
  \end{equation}
where 
\begin{equation*}
\Gamma(\mu_{1},\ldots , \mu_{m})= \Big\{  \{T_{k}\}_{k=1}^{m}: T_{k}: X_{1}\mapsto X_{k}\quad \textit{is measurable},\quad (T_{k})_{\sharp} \mu_{1} = \mu_{k} \quad \textit{and}\quad T_{1}=Id\Big\}
\end{equation*}
and $\mathcal{G}[(T_{1},T_{2},\ldots, T_{m})]=\displaystyle \int_{X_{1}} c(x_{1},T_{2}x_{1},\ldots, T_{m}x_{1})d\mu_{1}.$

 As usual, $Id$ denotes the identity map and $(T_{k})_{\sharp} \mu_{1}$ the image measure of $\mu_{1}$ through $T_{k}$ given by $(T_{k})_{\sharp} \mu_{1}(A_{k}) = \mu_{1}(T_{k}^{-1}(A_{k}))$, for every measurable subset $A_{k}$ of $X_{k}$.  For any $\{T_{k}\}_{k=1}^{m}\in\Gamma(\mu_{1},\ldots , \mu_{m})$, we can define the measure $\mu =(T_{1}, T_{2},\ldots, T_{m})_{\sharp} \mu_{1} \in \Pi(\mu_{1},\ldots , \mu_{m})$. Then $\mathcal{F}_{c}[\mu]=\mathcal{G}[(T_{1},T_{2},\ldots, T_{m})]$; thus, (KP) can be described as a relaxed version of (MP).
When $m=2$, these reduce respectively to the Kantorovich and Monge formulations of the classical optimal transport problem, which has profound connections with many different areas of mathematics, and an extremely wide range of applications in other fields, surveyed in, for example, \cite{Filippo}\cite{Villani} and \cite{Villani2}.  The multi-marginal generalization (ie, the case $m \geq 3$) has attracted a great deal of attention recently, largely due to its own substantial collection of  applications, in, for instance, physics, economics, finance and image processing (see \cite {Pass3} for an overview).

Under very general conditions (for instance, compactness of the spaces and continuity of the cost is more than enough) there exists a solution for (KP). In the two marginal case, a simple condition on $c$, known as the \emph{twist} condition, as well as absolute continuity of the first marginal $\mu_1$, is well known to ensure existence of Monge solutions, and, consequently, uniqueness in both problems \cite{Gangbo1}\cite{Gangbo2}.

 The situation is much more subtle for $m\geq 3$. Beginning with a seminal work by Gangbo and \'{S}wi\c{e}ch on the cost $c(x_{1}, \ldots,x_{m})=\sum_{k\neq i}^{m}|x_{k}-x_{i}|^{2}$ \cite{Gangbo}, many authors have proven that several particular cost functions yield Monge solutions \cite{Heinich}\cite{Carlier03}\cite{Pass4}, while a variety of others do not \cite{Carlier2}\cite{Pass1}\cite{Pass13}\cite{ColomboDePascaleDiMarino15}. An important challenge in current research is to fully identity and classify the  costs which ensure unique Monge solutions.

A sufficient condition for Monge solutions (known as the \emph{twist on splitting sets}), which encompasses all known examples of costs leading to Monge solutions, is provided in \cite{Kim}.  However, this condition is much more complicated and difficult to check for a given cost than its two marginal counterpart. Second order differential conditions on $c$ which guarantee the twist on splitting sets are known, but they are very strong; of  particular relevance to us, they require invertability of certain matrices of second derivatives which does not hold in some situations of interest \cite{Pass0}\cite{Pass4}. 

In this paper, we study the cost 
\begin{equation}\label{eqn: cyclic cost}
c(x_{1}, \ldots,x_{m})=\sum_{i=1}^{m-1}|x_i-x_{i+1}|^2 +|x_m-F(x_1)|^2,
\end{equation}
 for a fixed map $F: \mathbb{R}^n \rightarrow \mathbb{R}^n$. As highlighted in Section 1.7.4 of \cite{Filippo}, this cost measures the discrete time kinetic energy of a cloud of particles whose density at timestep $k$ is $\mu_k$, such that the final position of the particle initially at $x_1$ is fixed to be $F(x_1)$. In particular, when each $\mu_{k}=\mathcal{L}^{n}|_D$ is (normalized) Lebesgue measure on a common bounded domain $X_k=D \subset \mathbb{R}^n$ and $F: D \rightarrow  D$ is measure preserving, $F_\#\mu_k =\mu_k$, the Monge problem with this cost corresponds to the time discretization of Arnold's variational interpretation of the incompressible Euler equation \cite{Arnold66}; the Kantorovich formulation corresponds to a discretization of Brenier's generalization \cite{Brenier}.   If $m=2$ and $I+DF(x)$ is invertible (alternatively it corresponds to the quadratic cost up to a change of variables) where $I$ denotes the identity matrix, the cost is twisted; while for $m=3$, it is twisted on splitting sets as long as 
$DF(x) +DF(x)^T>0$. To the best of our knowledge, little is known analytically about the structure of solutions for $m \geq 4$, though the problem has received a fair bit of attention from a numerical perspective \cite{Thomas}\cite{Mirebeau}\cite{Benamou}\cite{Brenier08}.

As we show, cost function \eqref{eqn: cyclic cost} is not twisted on splitting sets for $m\geq 4$.  Nevertheless,  when $m=4$ and $F$ is a positive scalar multiple of the identity mapping, we are able to prove that all solutions are of Monge type, and therefore unique, under an additional regularity condition on the marginals (in addition to $\mu_1$, either $\mu_2$ or $\mu_4$ must be absolutely continuous).   This result is very special; indeed, as we show later on, it is in some sense impossible to go further.  A simple example shows that the extra regularity condition on $\mu_4$ or $\mu_2$ is required.  When $m=4$, and $F$ is any linear mapping other than a positive scalar multiple of the identity, we demonstrate that solutions may not be of Monge type, even for diffuse marginals.  Similarly, when  $m \geq 5$, we prove that solutions may not be of Monge type, even for $F(x)=x$.

To offer some perspective on these results, we note that generalized incompressible flows (ie, solutions to the infinite marginal version of the Kantorovich problem, when each marginal is uniform and $F$ measure preserving) are not generally unique  in dimension $n\geq 2$ \cite{BernotFigalliSantambrogio08}; however, unique Monge-type solutions exist when $F$ is close to the identity mapping \cite{EbinMarsden70}.  It seems reasonable to expect the same to hold for the time discretized problem. Our counterexamples essentially show that this is not the case for $m\geq 4$, at least when the marginals are allowed to differ.

 In the next section, we recall some preliminary results and definitions which will be needed in the paper.  In Section 3, we prove that solutions are of Monge type when the number of marginals $m=4$, $F(x)=x$ and the first and last marginal are absolutely continuous.  The fourth section is reserved for examples of non-Monge solutions when these conditions are violated.
 \section{\textbf{DEFINITIONS AND PRELIMINARIES
 }}
For the cost function \eqref{eqn: cyclic cost} 
 on $X = X_{1}\times X_{2}\times \ldots \times X_{m}$, we will approach the problem of minimizing $\mathcal{F}_{c}[\mu]=\displaystyle \int_{X} c(x_{1}, \ldots,x_{m})d\mu$, by the equivalent problem of maximizing: 

\begin{equation}\label{KPb}
  \mathcal{F}_{b}[\mu]=\displaystyle \int_{X} b(x_{1},\ldots, x_{m})d\mu \tag{KPb}
  \end{equation}
 where $b(x_{1},\ldots, x_{m})=\sum_{k=1}^{m-1}x_{k}\cdot x_{k+1}+x_{m}\cdot F(x_{1})$, over $\mu \in \Pi(\mu_{1},\ldots , \mu_{m})$.
 
 The dual of \eqref{KPb} is to minimize:
 
\begin{equation}\label{DP}
\sum_{k=1}^{m}\int_{X_{k}}u_{k}(x_{k})d\mu_{k}\tag{DP}
  \end{equation}
among all m-tuples $(u_{1}, u_{2},\ldots, u_{m})$ where each $u_{k} \in L^{1}(\mu_{k})$ and  $\sum_{k=1}^{m}u_{k} \geq b(x_{1},\ldots,x_{m})$, for all $(x_{1},\ldots,x_{m})\in X$.

We now introduce an important class of functions satisfying the constraint in \eqref{DP}.
\begin{definition}
An $m$-tuple of functions $(u_{1}, u_{2},\ldots, u_{m})$ is $b$-conjugate if for all k $$u_{k}(x_{k})= sup_{x_i\in X_{i}, i\neq k}\Big ( b(x_{1},\ldots,x_{m}) -\sum_{i\neq k}u_{i}(x_{i})\Big ) $$
\end{definition}
It is well known that if  $(u_{1}, u_{2},\ldots, u_{m})$ is $b$-conjugate, then each $u_{k}$ in inherits local Lipschitz and semi-convexity properties from $b$ \cite{McCann}.

The following well known duality result captures the connection between \eqref{DP} and \eqref{KPb}. Most of the assertions can be traced back to Kellerer \cite{Kellerer}; a proof of the $b$-conjugacy of the solutions can be found in \cite{Gangbo} or \cite{Pass4}.
\begin{theorem}
Assume $X_{k}$ is compact for every $k$. Then, there exists a solution $\mu$ to the Kantorovich problem and a $b$-conjugate solution $(u_{1}, u_{2},\ldots,u_{m})$ to its dual. The maximum and minimum values in \eqref{DP} and \eqref{KPb} respectively are the same and  $\sum_{k=1}^{m}u_{k}(x_{k})=  b(x_{1},\ldots,x_{m})$ for all $(x_{1},\ldots,x_{m})\in spt(\mu)$, where $spt(\mu)$ denotes the support of $\mu$.
\end{theorem}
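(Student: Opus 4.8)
The plan is to establish the three assertions of the theorem in turn: existence of a maximizer $\mu$ for \eqref{KPb} (equivalently, a solution of the Kantorovich problem \eqref{KP}), existence of a $b$-conjugate minimizer $(u_1,\dots,u_m)$ for \eqref{DP} with no duality gap, and the pointwise identity $\sum_k u_k(x_k)=b(x_1,\dots,x_m)$ on $\mathrm{spt}(\mu)$. \emph{Primal attainment and weak duality.} The admissible set $\Pi(\mu_1,\dots,\mu_m)$ is nonempty (it contains $\mu_1\otimes\cdots\otimes\mu_m$), tight (since $X$ is compact), and weak-$*$ closed (each marginal constraint $\mu\mapsto\int\varphi(x_k)\,d\mu$ is), hence weak-$*$ compact; since $b$ is continuous and bounded on the compact set $X$, the map $\mu\mapsto\int_X b\,d\mu$ is weak-$*$ continuous and attains its maximum. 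Moreover, for any feasible $\mu$ and any admissible $(u_1,\dots,u_m)$ one has $\int_X b\,d\mu\le\int_X\sum_k u_k(x_k)\,d\mu=\sum_k\int_{X_k}u_k\,d\mu_k$, so the supremum in \eqref{KPb} is at most the infimum in \eqref{DP}.

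\emph{No duality gap.} For the reverse inequality I would apply the Fenchel--Rockafellar theorem on $E=\prod_{k=1}^m C(X_k)$, with $\Phi(u_1,\dots,u_m)=\sum_k\int_{X_k}u_k\,d\mu_k$ (linear and everywhere continuous) and $\Psi(u_1,\dots,u_m)=0$ if $\sum_k u_k(x_k)\ge b$ on all of $X$ and $+\infty$ otherwise (convex, and finite e.g.\ at $u_1\equiv\sup_X b$, $u_k\equiv0$ for $k\ge2$), so that $\inf_E(\Phi+\Psi)=\inf\eqref{DP}$; computing the conjugates on $E^{\ast}=\prod_k\mathcal M(X_k)$ one finds that $\Phi^{\ast}$ is the indicator of $\{(\mu_1,\dots,\mu_m)\}$ while the effective domain of $\Psi^{\ast}$ consists of tuples of marginals of nonnegative Borel measures on $X$, whence the Fenchel--Rockafellar value coincides with $\sup_{\pi\in\Pi}\int_X b\,d\pi$. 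Equivalently, since $b$ is continuous on a compact space, one may simply invoke Kellerer's duality theorem \cite{Kellerer}.

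\emph{A $b$-conjugate dual minimizer.} Start from a minimizing sequence $(u_1^j,\dots,u_m^j)$ for \eqref{DP}. Replacing $u_1^j$ by its $b$-transform $\bar u_1^j(x_1)=\sup_{x_i\in X_i,\,i\ne1}\bigl(b(x_1,\dots,x_m)-\sum_{i\ne1}u_i^j(x_i)\bigr)$ preserves admissibility and, since admissibility forces $\bar u_1^j\le u_1^j$, does not increase the objective; iterating this coordinatewise and passing to the limit (as in \cite{Gangbo} or \cite{Pass4}) we may assume each $u_k^j$ is $b$-conjugate. The local Lipschitz and semiconvexity properties inherited from $b$ \cite{McCann} then make these potentials uniformly Lipschitz and semiconvex on the compact sets $X_k$; subtracting constants that sum to zero (which leaves the objective unchanged) makes them equibounded, so Arzel\`a--Ascoli extracts a uniformly convergent subsequence whose limit $(u_1,\dots,u_m)$ is admissible, minimizing, and --- by uniform convergence and continuity of $b$ --- again $b$-conjugate. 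By the previous paragraph, the value of this minimizer equals the maximum in \eqref{KPb}.

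\emph{Complementary slackness and the main obstacle.} With $\mu$ maximal for \eqref{KPb} and $(u_1,\dots,u_m)$ minimal for \eqref{DP}, equality of the two values gives $\int_X\bigl(\sum_k u_k(x_k)-b(x_1,\dots,x_m)\bigr)\,d\mu=0$; the integrand is continuous and nonnegative, hence vanishes $\mu$-a.e.\ and therefore on all of $\mathrm{spt}(\mu)$ by continuity, which is the asserted identity. The only genuinely non-routine points are the absence of a duality gap and the compactness argument for dual attainment: the former requires the Fenchel--Rockafellar (or a minimax) argument together with the correct identification of the polar of the admissibility constraint with marginals of measures on $X$, and the latter rests entirely on the regularization-by-$b$-conjugacy trick, since an arbitrary admissible tuple need not be precompact in any useful topology. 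Primal attainment, weak duality, and complementary slackness are soft.
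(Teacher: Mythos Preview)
The paper does not actually prove this theorem: it is stated as a well-known preliminary result, with the remark that ``most of the assertions can be traced back to Kellerer \cite{Kellerer}; a proof of the $b$-conjugacy of the solutions can be found in \cite{Gangbo} or \cite{Pass4}.'' Your sketch is correct and is precisely the standard argument those references carry out --- primal compactness, Kellerer/Fenchel--Rockafellar duality, regularization by $b$-conjugation followed by Arzel\`a--Ascoli, and complementary slackness on the support via continuity --- so there is nothing to compare.

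One minor imprecision worth tightening if you write this up formally: a single coordinatewise pass of $b$-transforms does not in general produce a fully $b$-conjugate $m$-tuple; what it does guarantee is a uniform Lipschitz bound (inherited from $b$ on the compact $X$), which is all you need for Arzel\`a--Ascoli. The clean way to finish is to take the uniform limit $(u_1,\dots,u_m)$ and then apply one more round of $b$-conjugation to it: this cannot increase the dual objective, but the limit is already minimal, so the value is unchanged and the resulting tuple is $b$-conjugate. Alternatively, one checks directly that $b$-conjugacy is preserved under uniform limits on compact sets.
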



 We will use the next theorem, established in  \cite{Ballantine}\cite{Ballantine2}, in the construction of counterexamples. For this purpose,  we denote by $\Re^{d}$ the set of all $2\times 2$ real matrices that can be expressed as the product of $d$ positive definite real matrices.

\begin{theorem}
 Assume that $M=\begin{pmatrix}
a& b\\
c& d
\end{pmatrix}
$ is a $2\times 2$ matrix and $\vert M\vert >0$, where $\vert M\vert$ denotes the determinant of $M$, then:
\begin{enumerate}
\item $M\in \Re^{2}$ iff $M$ is diagonalizable and its eigenvalues are both positive.
\item $M\in \Re^{3}$ iff $tr(M)> 0$ or $(c-b)^{2}>4|M|$.
\end{enumerate} 
\end{theorem}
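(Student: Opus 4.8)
The statement is a purely linear–algebraic fact about products of positive definite $2\times2$ matrices, so my plan is to reconstruct it from scratch using similarity and congruence, treating the two assertions separately.

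For part 1, I would prove both implications directly. If $M=P_1P_2$ with $P_1,P_2$ positive definite, then $P_1^{-1/2}MP_1^{1/2}=P_1^{1/2}P_2P_1^{1/2}$ is symmetric and positive definite, so $M$ is similar to a symmetric positive definite matrix and is therefore diagonalizable over $\mathbb R$ with two positive eigenvalues. Conversely, if $M=SDS^{-1}$ with $D$ positive diagonal and $S$ invertible, then the factorization $M=(SS^T)(S^{-T}DS^{-1})$ exhibits $M$ as a product of two positive definite matrices (the second factor being congruent to $D$). In passing I would record the by-product, used below, that every $N\in\Re^2$ may be written $N=SGS^{-1}$ with $S,G$ positive definite (take $S=R_1^{1/2}$, $G=R_1^{1/2}R_2R_1^{1/2}$ when $N=R_1R_2$).

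For the necessity half of part 2 I would use $\Re^3=\Re^1\cdot\Re^2$, writing $M=R_1N$ with $R_1$ positive definite and $N=SGS^{-1}$ as above. Substituting $w=S^{-1}v$ turns the form $v^TMv$ into $w^T(SR_1S)Gw$, and $(SR_1S)G$ is a product of two positive definite matrices, hence by part 1 has positive eigenvalues, hence positive trace; a $2\times2$ matrix of positive trace has a symmetric part of positive trace and so takes a positive value as a quadratic form. Thus $\tfrac12(M+M^T)$ is \emph{not} negative semidefinite. Since $\mathrm{tr}\,\tfrac12(M+M^T)=a+d$ and $\det\tfrac12(M+M^T)=\det M-\tfrac14(c-b)^2$, and a symmetric $2\times2$ matrix fails to be negative semidefinite exactly when its trace is positive or its determinant negative, this is precisely the stated condition $a+d>0$ or $(c-b)^2>4\det M$.

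For the sufficiency half — the step I expect to be the main obstacle, because when $a+d\le0$ one cannot build the factorization from the spectral data of $M$ itself (its eigenvalues are negative or non-real) — I would parametrize the positive definite matrices of determinant $1$ by
\[
P=\begin{pmatrix}\cosh t+\sinh t\cos\phi & \sinh t\sin\phi\\ \sinh t\sin\phi & \cosh t-\sinh t\cos\phi\end{pmatrix},\qquad t\ge0,\ \phi\in[0,2\pi),
\]
compute $\mathrm{tr}(MP)=(a+d)\cosh t+\sinh t\big[(a-d)\cos\phi+(b+c)\sin\phi\big]$, optimize over $\phi$ to get $(a+d)\cosh t+\sinh t\sqrt{(a-d)^2+(b+c)^2}$, and invoke the identity $(a-d)^2+(b+c)^2-(a+d)^2=(c-b)^2-4\det M$. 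Under the hypothesis this makes the coefficient $a+d+\sqrt{(a-d)^2+(b+c)^2}$ of $\tfrac12 e^t$ strictly positive, so $\sup_P\mathrm{tr}(MP)=+\infty$; choosing $P$ with $\mathrm{tr}(MP)^2>4\det M=4\det(MP)$ forces $MP$ to have two distinct real eigenvalues of positive sum and positive product, hence to be diagonalizable with positive eigenvalues, i.e. $MP\in\Re^2$ by part 1. Then $M=(MP)P^{-1}$, a product of an element of $\Re^2$ with a positive definite matrix, so $M\in\Re^3$. The only genuinely delicate point is choosing the right one-parameter family of positive definite test matrices and spotting the displayed identity; part 1 and the necessity half are routine bookkeeping by comparison.
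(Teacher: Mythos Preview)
The paper does not actually prove this theorem: it is quoted from Ballantine's work \cite{Ballantine}\cite{Ballantine2} and used as a black box in Section~\ref{sect: examples}. So there is no ``paper's own proof'' to compare against; your proposal supplies what the paper deliberately outsources.

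That said, your argument is correct and self-contained. Part~1 is standard: the similarity $P_1^{-1/2}MP_1^{1/2}=P_1^{1/2}P_2P_1^{1/2}$ and the factorization $M=(SS^T)(S^{-T}DS^{-1})$ are exactly the right moves. For part~2, your necessity argument is clean --- reducing to the non-negative-semidefiniteness of the symmetric part of $M$ and reading off the trace/determinant conditions via the identity $\det\tfrac12(M+M^T)=\det M-\tfrac14(c-b)^2$ is efficient. The sufficiency half, which you rightly flag as the delicate step, is also sound: the parametrization of $\mathrm{SL}(2,\mathbb R)$-positive-definite matrices, the trace computation, and the algebraic identity $(a-d)^2+(b+c)^2-(a+d)^2=(c-b)^2-4\det M$ all check out, and the conclusion that $\mathrm{tr}(MP)$ can be made large enough to force $MP$ into $\Re^2$ is correct. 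One cosmetic point: when you write ``$\sup_P\mathrm{tr}(MP)=+\infty$'' you should make explicit that you are first fixing the optimal $\phi$ (which depends only on $a,b,c,d$) and then sending $t\to\infty$; as written it reads as if $\phi$ and $t$ vary independently, which is fine but slightly obscures why a single concrete $P$ does the job.
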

We close this section by recalling a couple of well known formulas which will be useful in the construction of counterexamples in Section \ref{sect: examples}.

 For any $2\times 2$ matrices $A$ and $B$ we have:
\begin{equation}\label{eqn: det of sum}
\vert A + B\vert = \vert A\vert + \vert B\vert + trace\big(Adj(A)B\big)
\end{equation}
 where $Adj(A)$ denote the adjugate of $A$.
 
Given a convex function $f: \mathbb{R}^{n}\longrightarrow \mathbb{R}\cup \{\infty\}$, its Legendre-transform will be denoted by $f^{*}$; that is, $f^{*}(y)= \sup_{x} (x\cdot y - f(x))$. We have special interest in the Legendre-transform of $f(x)=\frac{1}{2}x^{T}Ax + b\cdot x$ for a given positive definite $n\times n$ matrix $A$ and $b \in \mathbb{R}^n$. For this function, we have:
\begin{equation}\label{Lf}
f^{*}(y)=\frac{1}{2}(y-b)^{T}A^{-1}(y-b).
\end{equation}
\section{\textbf{MONGE SOLUTIONS
}}
We now show that under regularity conditions on the first and fourth margi-nal, we obtain a unique Monge solution for the case $m=4$ and $F(x)=x$.\\
In what follows $\mathcal{L}^{n}$ denotes the Lebesgue measure on $\mathbb{R}^{n}$.
\begin{theorem}
Let $\mu_{k}$ be probability measures over open bounded sets $X_{k}\subseteq\mathbb{R}^{n}$, $k=1, 2, 3, 4$. Take $b(x,y,z,w)= x\cdot y + y\cdot z + z\cdot w + w\cdot x $ and assume $\mu_{1}, \mu_{4}$ are absolutely continuous with respect to $\mathcal{L}^{n}$. Then any solution of the Kantorovich problem \eqref{KPb} is induced by a map.
\end{theorem}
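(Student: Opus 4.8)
The plan is to exploit the special additive structure of $b(x,y,z,w) = x\cdot y + y\cdot z + z\cdot w + w\cdot x$. First I would fix a $b$-conjugate solution $(u_1,u_2,u_3,u_4)$ of the dual problem \eqref{DP}, guaranteed by the duality theorem, so that $u_1(x)+u_2(y)+u_3(z)+u_4(w) = b(x,y,z,w)$ on $\mathrm{spt}(\mu)$. The key observation is that the cost is linear in each variable, and in fact, grouping terms, $b = y\cdot(x+z) + w\cdot(x+z) = (x+z)\cdot(y+w)$. The middle variables $y$ and $z$ each appear only through the sums $x+z$ and... actually more useful: for fixed $x,z$ the dependence on $y$ is through $y\cdot(x+z)$ and on $w$ through $w\cdot(x+z)$. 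So I would first integrate out (i.e., apply $b$-conjugacy in) the variables $y$ and $w$ to reduce to a two-marginal-type problem in $x$ and $z$ with an effective cost. Concretely, define $\bar u_2(v) = \sup_y (v\cdot y - u_2(y))= u_2^*(v)$ and $\bar u_4(v) = u_4^*(v)$, the Legendre transforms; then on $\mathrm{spt}(\mu)$ we have $u_1(x) + u_3(z) + u_2^*(x+z) + u_4^*(x+z) = 0$, and moreover for $(x,y,z,w)\in\mathrm{spt}(\mu)$, $y$ achieves the sup defining $u_2^*(x+z)$ and $w$ achieves the sup defining $u_4^*(x+z)$.

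Next I would extract a pointwise first-order condition. Since $\mu_1$ is absolutely continuous, $u_1$ is differentiable $\mu_1$-a.e. (it is locally Lipschitz and semiconvex by $b$-conjugacy). At a point $x_0$ of differentiability of $u_1$, and for any $(x_0,y,z,w)\in\mathrm{spt}(\mu)$, the function $x\mapsto u_1(x) + u_3(z)+u_2^*(x+z)+u_4^*(x+z)$ attains an interior minimum value $0$ at $x=x_0$ (with $z$ fixed, using the constraint from \eqref{DP}), which forces $\nabla u_1(x_0) = -g(x_0+z)$, where $g$ is an element of the (sub/super)differential of $-(u_2^*+u_4^*)$ evaluated at $x_0 + z$. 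The point is that this pins down $x_0 + z$ as a function of $x_0$: I would argue that $u_2^* + u_4^*$, being a sum of two convex functions whose gradients must sum to the prescribed value $-\nabla u_1(x_0)$... this is where the hypothesis $F = \mathrm{id}$ and $m=4$ is essential. Here is the cleaner route: $y$ maximizes $y\cdot(x+z) - u_2(y)$, so $x+z \in \partial u_2(y)$; symmetrically $x+z\in\partial u_4(w)$; and from the $y$-optimality and $w$-optimality together with differentiability of $u_1$ at $x_0$ one deduces $\nabla u_1(x_0) = y + w$ (the coefficient of $x$ in $b$ is $y+w$). Likewise, if $\mu_4$ is a.c., differentiability of $u_4$ gives $\nabla u_4(w) = z + x$. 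So on $\mathrm{spt}(\mu)$ we get the a.e. relations $y + w = \nabla u_1(x)$ and — using the other three conjugacy relations — enough equations to solve for $y, z, w$ in terms of $x$.

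The crux is then a linear-algebra/injectivity step: show that the system of relations
\[
y + w = \nabla u_1(x), \quad x+z\in\partial u_2(y),\quad y+w...
\]
forces $(y,z,w)$ to be uniquely determined by $x$ for $\mu_1$-a.e.\ $x$. I expect the main obstacle to be exactly this: naively, the sums $x+z$ and $y+w$ are determined, but $z$ and the individual middle variables are not obviously pinned down without further argument, and indeed the paper's own counterexamples show the result is false once $m\ge 5$ or $F\ne \lambda\,\mathrm{id}$ — so the argument must use, in a genuine way, that with four marginals and $F=\mathrm{id}$ the cyclic structure closes up. I anticipate the resolution uses the twist of the two-marginal quadratic subproblem: after the reduction, the pair $(x,z)$ is governed by a two-marginal optimal transport problem between $\mu_1$ and $\mu_3$ with a cost of the form $-|x+z|^2/(4)\cdot(\text{const})$ plus lower-order terms — effectively $x\cdot z$ up to smooth functions of $x$ alone and $z$ alone — which is twisted, so $z$ is a function of $x$; then $y$ and $w$ are recovered by the already-established relations $x+z\in\partial u_2(y)$, $x+z\in \partial u_4(w)$ together with $y+w = \nabla u_1(x)$, and one must check these determine $y,w$ uniquely (this is where $\partial u_2$, $\partial u_4$ single-valued a.e.\ and a further twist-type argument enters). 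I would carry out the steps in this order: (1) set up $b$-conjugate potentials and the support identity; (2) reduce to the $(x,z)$ problem via Legendre transforms in $y,w$; (3) differentiate $u_1$ (and $u_4$) to get the pointwise relations; (4) invoke twist to get $z = T_3(x)$; (5) recover $y = T_2(x)$, $w = T_4(x)$; (6) conclude the Kantorovich solution is concentrated on the graph of $x\mapsto(x,T_2(x),T_3(x),T_4(x))$, hence Monge and unique.
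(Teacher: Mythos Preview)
Your observation that $b(x,y,z,w)=(x+z)\cdot(y+w)$ is correct and elegant, but the reduction it suggests is precisely what makes your step (4) fail. After Legendre-transforming out $y$ and $w$, the effective surplus in $(x,z)$ is $\Phi(x+z)$ with $\Phi:=u_2^*+u_4^*$, a convex function of the single variable $x+z$. The twist condition for this cost is injectivity of $z\mapsto \nabla\Phi(x+z)$, i.e.\ strict convexity of $\Phi$; but $\Phi$ is built from the unknown dual potentials and there is no reason it should be strictly convex. Your claim that the effective cost is ``effectively $x\cdot z$ up to smooth functions of $x$ alone and $z$ alone'' is false: a general convex function of $x+z$ does not decompose that way (only quadratics do). So step (4) does not go through, and since step (5) also needs uniqueness of the maximizers $y\in\partial u_2^*(x+z)$ and $w\in\partial u_4^*(x+z)$---which would require differentiability of $u_2^*,u_4^*$, not of $u_2,u_4$---that step is also unjustified. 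The factorization $b=(x+z)\cdot(y+w)$ is in fact the \emph{reason} the cost is not twisted on splitting sets: it shows that only the sums $x+z$ and $y+w$ are constrained by the first-order conditions, which is exactly the degeneracy your counterexample intuition anticipates.

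The paper's argument avoids this by reducing to an $(x,y)$ problem rather than an $(x,z)$ problem: set $h:=u_4^*$ and $f(x,y):=x\cdot y+\sup_z\bigl[y\cdot z-u_3(z)+h(x+z)\bigr]$, so that $f(x,y)\le u_1(x)+u_2(y)$ with equality on the support. One then shows that $(y,z,w)\mapsto D_xf(x_0,y)$ is injective on the fibre over $x_0$. This is \emph{not} a standard twist: it is a monotonicity argument. If $D_xf(x_0,y_1)=D_xf(x_0,y_2)$ then $y_1+w_1=y_2+w_2$; the argmax conditions for $z_k$ give $(y_1-y_2)\cdot(z_2-z_1)\le 0$, while monotonicity of $\partial u_4^*$ gives $(w_1-w_2)\cdot(z_1-z_2)\ge 0$. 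Substituting $y_1-y_2=w_2-w_1$ forces equality everywhere, and equality in the $u_4$ inequalities means \emph{both} $x_0+z_1$ and $x_0+z_2$ lie in $\partial u_4(w_2)$. Now the hypothesis that $\mu_4$ is absolutely continuous is used: $u_4$ is differentiable at $w_2$, so $\partial u_4(w_2)$ is a singleton, whence $z_1=z_2$, and then $w_1=w_2$, $y_1=y_2$. The absolute continuity of $\mu_4$ enters exactly here, to break the degeneracy you identified; your outline invokes $\nabla u_4(w)=x+z$ but never explains how that relation, which goes from $w$ to $z$, helps determine $w$ from $x$.
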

\begin{proof}
Let $\mu$ be a solution to \eqref{KPb} and $(u_{1}, u_{2}, u_{3}, u_{4})$ a $b$-conjugate solution to its dual. Consider the set
 $$S=\Big\{(x,y,z,w): Du_{1}(x)\;\text{and } Du_{4}(w)\;\text{exist and} \; b(x,y,z,w)= u_{1}(x)+ u_{2}(y)+ u_{3}(z)+ u_{4}(w)\Big\}.$$
Since the functions $u_{1}(x)$ and $u_{4}(w)$ are Lipschitz, they are differentiable $\mathcal{L}^{n}$-a.e., and therefore $\mu_{1}$ and $\mu_{4}$  a.e. by hypothesis. Hence, $\mu(S)=1$. By setting $f(x,y)=x\cdot y + sup_{z} [y\cdot z-u_{3}(z) + h(x+z)]$ where $ h(x+z)= sup_{w}[(x+z)\cdot w - u_{4}(w)]$, we obtain: $$b(x,y,z,w)-u_{3}(z)- u_{4}(w)\leq f(x,y)\leq u_{1}(x)+ u_{2}(y)$$ for all $x,y,z,w$ and in particular equality holds on  $S$. 

Now, for any   fixed $x_{0}$, we will show that there is only one $y_{0},z_{0},w_{0}$ such that in $(x_0, y_{0},z_{0},w_{0}) \in S$. 
Since the function $x\mapsto f(x,y_{0})$ is convex and $f(x,y_{0})\leqslant u_{1}(x)+u_{2}(y_{0})$ for every $x$, it is subdifferentiable everywhere. For $(x_0, y_{0},z_{0},w_{0}) \in S$ the equality $f(x_{0},y_{0})=u_{1}(x_{0})+u_{2}(y_{0})$ implies that the subdifferential of $f(x,y_{0})$ at $x_{0}$ is contained in the subdifferential of $u_{1}(x)$ at $x_{0}$, which is $\{ Du_{1}(x_{0})\}$; that is, $D_{x}f(x_{0},y_{0})$ exists and equals $Du_{1}(x_{0})$. By a similar argument  $Dh(x_{0}+ z_{0})$ exists, $Dh(x_{0}+ z_{0}) =D_{x}f(x_{0},y_{0})-y_0=w_0$, and clearly, $z_{0} \in \text{argmax}[ y_{0}\cdot z - u_{3}(z) + h(x_{0}+z)]$. 
We claim that the map $(y,z,w)\mapsto D_{x}f(x_{0},y)$ with domain $R:=\{ (y,z,w): (x_{0},y,z,w) \in S\}$ is injective; this will imply the desired result.

Assume $D_{x}f(x_{0},y_{1}) = D_{x}f(x_{0},y_{2})$ for some $(y_{1},z_{1},w_{1}), (y_{2},z_{2},w_{2})\in R$.  Note that 
\begin{equation}\label{eqn: first order h}
y_1+w_1=y_{1} + Dh(x_{0}+ z_{1})=D_{x}f(x_{0},y_{1}) = D_{x}f(x_{0},y_{2})=y_{2} + Dh(x_{0}+ z_{2})=y_2+w_2
\end{equation}
 and $z_{k} \in \text{argmax}[ y_{k}\cdot z - u_{3}(z) + h(x_{0}+z)]$, $k =1,2$. 
 Therefore 
\begin{eqnarray}\label{eqn: y1z1 inequality}
 y_{1}\cdot z_{2} - u_{3}(z_{2}) + h(x_{0}+z_{2})&\leq&  y_{1}\cdot z_{1} - u_{3}(z_{1}) + h(x_{0}+z_{1})\\
 y_{2}\cdot z_{1} - u_{3}(z_{1}) + h(x_{0}+z_{1})&\leq & y_{2}\cdot z_{2} - u_{3}(z_{2}) + h(x_{0}+z_{2});\label{eqn: y2z2 inequality}
 \end{eqnarray}
adding these inequalities gives $(y_{1}-y_{2})\cdot(z_{2}-z_{1})\leq 0$, then by \eqref{eqn: first order h},
\begin{equation}\label{eqn: wz dot product negative}
(w_{2}-w_{1})\cdot(z_{2}-z_{1})\leq 0.
\end{equation}
Furthermore, since $w_k  \in \text{argmax}[ (x_{0}+z_{k})\cdot w - u_{4}(w)] $,
 \begin{eqnarray} \label{eqn: xz1w1 inequality}
 (x_{0}+z_{1})\cdot w_{2} - u_{4}(w_{2})&\leq& h(x_{0}+z_{1})=(x_{0}+z_{1})\cdot w_{1} - u_{4}(w_{1})\\
 (x_{0}+z_{2})\cdot w_{1} - u_{4}(w_{1})&\leq& h(x_{0}+z_{2})=(x_{0}+z_{2})\cdot w_{2} - u_{4}(w_{2});\label{eqn: xz2w2 inequality}
 \end{eqnarray}
after adding and canceling similar terms we obtain 
 \begin{equation} \label{eqn: wz dot product positive}
 (w_{2}-w_{1})\cdot (z_{1}-z_{2})\leq 0.
 \end{equation}
Therefore, by \eqref{eqn: wz dot product negative} and \eqref{eqn: wz dot product positive}, $(w_{2}-w_{1})\cdot (z_{1}-z_{2})=0$ and we must have equality in \eqref{eqn: y1z1 inequality}, \eqref{eqn: y2z2 inequality}, \eqref{eqn: xz1w1 inequality} and \eqref{eqn: xz2w2 inequality}.   This implies that   $w_{2}\in \text{argmax}[ (x_{0}+z_{2})\cdot w - u_{4}(w)] \bigcap  \text{argmax}[ (x_{0}+z_{1})\cdot w - u_{4}(w)]$; additionally, $(y_{2},z_{2},w_{2})\in R$ implies $u_{4}(w)$ is differentiable at $w_{2}$, and so 
\begin{equation}
x_{0}+z_{1}=Du_{4}(w_{2})=x_{0}+z_{2};
\end{equation}
that is, $z_1=z_2$.  The equality $w_k =Dh(x_0+z_k)$ for $k=1,2$ then implies that $w_{1}=w_{2}$, and so $y_{1}=y_{2}$ by \eqref{eqn: first order h}.

In summary, the equation $D_{x}f(x_{0},y_{0})=Du_{1}(x_{0})$, which holds on $S$ and therefore $\mu$ almost everywhere, implies that $(y_{0},z_{0},w_{0})$ is uniquely defined from $x_{0}$; therefore, the 3-tuple $(T_{2},T_{3},T_{4})$ where $T_{k}$ is the map associating each $x_{0}$ to $y_{0}$, $z_{0}$ and $w_{0}$ respectively, induces $\mu$. 
\end{proof}
\begin{remark}
In the proof above, $T_{2}$ solves a sort of  \emph{effective}, two marginal optimal transportation problem with surplus $f$ and marginals $\mu_{1}$  and  $\mu_{2}$. Our argument essentially verifies that $f$ is twisted; that is, $y\mapsto D_{x}f(x_{0},y)$ is injective (this condition is well known to ensure Monge solution for two marginal problems \cite{Filippo}). 
\end{remark}
\begin{remark}
In a similar way, we can prove Theorem 3.1 if we replace $F=I$ by $F=\lambda I$, where $\lambda >0 $ is a scalar.  
\end{remark}
A standard argument now implies uniqueness of solutions to \eqref{KPb}.
\begin{corollary}
Assume the same conditions as Theorem 3.1. Then the solution to the Kantorovich problem \eqref{KPb} is unique.
\end{corollary}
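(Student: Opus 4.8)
The plan is the standard linearity-plus-extremality argument, invoking Theorem 3.1 as a black box. I would suppose, towards a contradiction, that $\mu^{0}$ and $\mu^{1}$ are two distinct solutions of \eqref{KPb}. The feasible set $\Pi(\mu_{1},\mu_{2},\mu_{3},\mu_{4})$ is convex and the objective $\mu\mapsto\mathcal{F}_{b}[\mu]$ is linear in $\mu$, so the midpoint $\mu:=\tfrac{1}{2}(\mu^{0}+\mu^{1})$ again lies in $\Pi(\mu_{1},\mu_{2},\mu_{3},\mu_{4})$ and satisfies $\mathcal{F}_{b}[\mu]=\tfrac{1}{2}\big(\mathcal{F}_{b}[\mu^{0}]+\mathcal{F}_{b}[\mu^{1}]\big)$, which equals the maximal value; hence $\mu$ is itself a solution of \eqref{KPb}.

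Next I would apply Theorem 3.1 to $\mu$. Its hypotheses --- $m=4$, $b(x,y,z,w)=x\cdot y+y\cdot z+z\cdot w+w\cdot x$, and absolute continuity of $\mu_{1}$ and $\mu_{4}$ --- constrain only the cost and the fixed marginals, so they are automatically inherited by $\mu$. Thus there exist measurable maps $T_{k}\colon X_{1}\to X_{k}$, $k=2,3,4$, with $\mu=(Id,T_{2},T_{3},T_{4})_{\sharp}\mu_{1}$; in particular $\mu$ is concentrated on the graph $G:=\{(x,T_{2}x,T_{3}x,T_{4}x):x\in X_{1}\}$, i.e.\ $\mu(X\setminus G)=0$.

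To finish, I would note that $\mu^{0}\le 2\mu$ and $\mu^{1}\le 2\mu$ as nonnegative measures, so both $\mu^{0}$ and $\mu^{1}$ vanish on $X\setminus G$ and are therefore concentrated on $G$ as well. Any probability measure $\nu$ concentrated on the graph of a map over the first coordinate is, however, determined by its first marginal: for every bounded Borel function $\varphi$ on $X$,
\[
\int_{X}\varphi\,d\nu=\int_{X_{1}}\varphi\big(x,T_{2}x,T_{3}x,T_{4}x\big)\,d(\pi_{1})_{\sharp}\nu(x),
\]
where $\pi_{1}$ denotes projection onto the first factor. Since $(\pi_{1})_{\sharp}\mu^{0}=(\pi_{1})_{\sharp}\mu^{1}=\mu_{1}$, this forces $\mu^{0}=\mu^{1}=(Id,T_{2},T_{3},T_{4})_{\sharp}\mu_{1}$, contradicting $\mu^{0}\ne\mu^{1}$.

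This argument is essentially routine; the only points requiring a little care are the Borel measurability of the graph $G$ (needed for ``$\mu$ concentrated on $G$'' to be meaningful) and the elementary disintegration identity displayed above, both of which are standard. I do not anticipate any genuine obstacle.
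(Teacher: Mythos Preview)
Your argument is correct and follows essentially the same standard linearity-plus-extremality route as the paper: the paper first applies Theorem 3.1 to each of the two putative solutions to write them as pushforwards by maps, then observes that their midpoint is an optimizer which cannot be concentrated on a graph, whereas you apply Theorem 3.1 only to the midpoint and use $\mu^{0},\mu^{1}\le 2\mu$ to force both onto the same graph. The difference is cosmetic; your version even fills in the ``clearly not possible'' step that the paper leaves implicit.
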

\begin{proof}
Let $\mu^{1}$ and $\mu^{2}$ be distinct solutions of \eqref{KPb}. By Theorem 3.1,
$\mu^{1}=(Id,T_{2}^{1},T_{3}^{1},T_{4}^{1})$ and $\mu^{2}=(Id,T_{2}^{2},T_{3}^{2},T_{4}^{2})$ for some 3-tuples of measurable maps $(T_{2}^{1},T_{3}^{1},T_{4}^{1}) \neq (T_{2}^{2},T_{3}^{2},T_{4}^{2})$. Since the set of solutions of \eqref{KPb} is convex, $\mu=\frac{1}{2}\mu^{1}+\frac{1}{2}\mu^{2}$  is also a solution. Hence, applying  one more time Theorem 3.1, we conclude that $\mu$ is concentrated on a graph. This is clearly not possible, completing the proof.
\end{proof}
\section{\textbf{NON-MONGE SOLUTIONS.
 }}\label{sect: examples}
We now illustrate why the conditions on the marginal $\mu_{4}$, the number of variables $m$ and  the map $F$ in the definition of $b$ and $c$ of Theorem 3.1 are necessary.
\subsection{The regularity condition on $\mu_{4}$.}
Assuming $m$ and $F$ as in Theorem 3.1 the next example will show that if $\mu_{4}$ is not absolutely continuous, we can find a solution for \eqref{KP} not induced by a map. Furthermore, the uniqueness result of Corollary 3.1 fails.
\begin{example}  Let $X_{k}=B(0,r)\subseteq\mathbb{R}^{n}$ be an open ball, $r>0$. Consider $c(x,y,z,w) = \frac{1}{2}\big(|x-y|^{2}+|y-z|^{2}+|z-w|^{2}+|w-x|^{2}\big)$ and the following measures on $X_{k}$: The Dirac measure at the origin $\mu_{2}=\mu_{4}=\delta_{0}$ and the renormalized n-dimensional Lebesgue measure $\mu_{1}=\mu_{3}=\frac{\mathcal{L}^{n}}{k_{n}r^{n}}$, where $k_{n}$ is the volume of the n-dimensional ball of radius 1. Take any $\mu$ in $\Pi(\mu_{1},\mu_{2},\mu_{3},\mu_{4})$. Since $(x,y,z,w)\in spt(\mu)$ implies  $y=w=0$, we obtain:
\begin{flalign*}
\displaystyle \int_{X_{1}\times X_{2}\times X_{3}\times X_{4}} c(x,y,z,w) d\mu & =\displaystyle \int_{X_{1}\times X_{2}\times X_{3}\times X_{4}} \Big(|x|^{2}+|z|^{2}\Big) d\mu \\
  &=\displaystyle \int_{B(0,r)} |x|^{2} d\mu_{1}(x) + \int_{B(0,r)} |z|^{2} d\mu_{3}(z);\\
\end{flalign*} 
that is, $\mathcal{F}[\mu]$ is independent of $\mu$, hence any element in $\Pi(\mu_{1},\mu_{2},\mu_{3},\mu_{4})$ is a minimizer. Therefore we can find optimal measures  $\mu$ for (KP) not concentrated on a graph of a measurable map; for instance, the product measure $\mu=\mu_{1}\otimes\mu_{2}\otimes\mu_{3}\otimes\mu_{4}$. On the other hand, if we set $\mu=(Id, F, T, F)_{\sharp}\mu_{1}$ where $T_{\sharp}\mu_{1}=\mu_{3}$ and $F=0$, we get solutions for the Monge problem.
\end{example}  
 \subsection{The condition $F= I$.}
In this subsection, by assuming $F$ is not a positive multiple of the identity mapping, $m=4$ and $n=2$, we will find absolutely continuous marginals in $\mathbb{R}^{2}$ such that a solution of \eqref{KPb} is concentrated in a 3-dimensional set. Therefore, this solution will not be induced by a map.  For this purpose, the following lemma will be needed:
\begin{lemma}  For each $2\times 2$ real matrix $F$ such that $ 
F \neq \lambda I$ for some $\lambda > 0$,
 there exists
 $M 
\in\Re^{2}$ such that $F+M$ is singular.
\end{lemma}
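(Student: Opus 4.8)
The plan is to reduce the claim to producing a nonzero vector $v$ together with a matrix $M\in\Re^{2}$ satisfying $Mv=-Fv$; then $(F+M)v=0$, so $F+M$ is singular. The only real work is to guarantee that the $M$ we construct actually belongs to $\Re^{2}$, and for this I would invoke part (1) of Theorem 2.2: a $2\times 2$ matrix of positive determinant lies in $\Re^{2}$ exactly when it is diagonalizable over $\mathbb{R}$ with two positive eigenvalues, a criterion obviously invariant under conjugation $M\mapsto SMS^{-1}$.

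The main case is that $F$ is not a scalar multiple of the identity; here I would proceed as follows. There is a vector $v$ for which $v$ and $Fv$ are linearly independent, since otherwise every nonzero vector would be an eigenvector of $F$, forcing $F=\lambda I$. Working in the basis $\mathcal{B}=(v,-Fv)$ of $\mathbb{R}^{2}$, any linear map $M$ with $Mv=-Fv$ has matrix $[M]_{\mathcal{B}}=\begin{pmatrix}0&\alpha\\1&\beta\end{pmatrix}$, with $\alpha,\beta$ free, and characteristic polynomial $t^{2}-\beta t-\alpha$. Choosing for example $\beta=3$ and $\alpha=-2$ yields eigenvalues $1$ and $2$, so $[M]_{\mathcal{B}}$ has positive determinant and is diagonalizable with positive eigenvalues; hence $[M]_{\mathcal{B}}\in\Re^{2}$ by Theorem 2.2(1), and then $M\in\Re^{2}$ as well, since conjugation preserves the determinant, diagonalizability, and the eigenvalues. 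By construction $(F+M)v=Fv-Fv=0$, so $F+M$ is singular. In the remaining case $F=\lambda I$, the hypothesis forces $\lambda\le 0$, and for $\lambda<0$ one may simply take $M=-\lambda I=(\sqrt{-\lambda}\,I)^{2}\in\Re^{2}$, giving $F+M=0$.

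The step I expect to be the crux is establishing that the constructed $M$ lies in $\Re^{2}$: one must be sure that the freedom remaining after prescribing $M$ only along the single direction $v$ is enough to stay inside $\Re^{2}$, and this is exactly what the eigenvalue criterion of Theorem 2.2(1) delivers, once one observes that diagonalizability together with positivity of the eigenvalues transfers from $[M]_{\mathcal{B}}$ to $M$ through the change of basis. If desired, the identity $\det(F+M)=\det F+\det M+tr\big(Adj(F)M\big)$ can be used to re-confirm $\det(F+M)=0$ a posteriori.
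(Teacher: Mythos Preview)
Your argument is correct and considerably more streamlined than the paper's. The paper works entirely in standard coordinates: writing $F=\begin{pmatrix}a&b\\c&d\end{pmatrix}$, it splits into the cases $c\neq 0$; $c=0,\ b\neq 0$; and $b=c=0$, in each case building $M$ entry by entry so that the identity $|F+M|=|F|+|M|+tr\bigl(Adj(F)M\bigr)$ vanishes while $M$ is triangular (or diagonal) with distinct positive eigenvalues. The diagonal subcase $a>d\ge 0$ in particular requires an explicit one-parameter family of $2\times 2$ matrices together with a check that for large parameter the discriminant $tr(M)^{2}-4|M|$ becomes positive. You instead observe that whenever $F$ is not scalar one can pass to the adapted basis $(v,-Fv)$, where the single linear constraint $Mv=-Fv$ fixes only the first column of $[M]_{\mathcal B}$; the remaining column is then chosen to force the eigenvalues to be $1$ and $2$, and the similarity invariance of the criterion in Theorem~2.2(1) transfers $\Re^{2}$-membership back to the standard basis. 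This collapses all of the paper's non-scalar cases into a single uniform construction; what the paper's approach buys in exchange is only explicit entries for $M$, which are not used downstream.

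One small remark: your scalar case treats only $\lambda<0$, leaving $F=0$ unaddressed. This is not a defect of your proof relative to the paper's---the paper's case analysis also silently omits $F=0$---and in fact the lemma as literally stated fails there, since every $M\in\Re^{2}$ has positive determinant and hence $F+M=M$ can never be singular.
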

\begin{proof}
Let  $F =\begin{pmatrix}
a& b\\
c& d
\end{pmatrix}
$ be $2\times 2$ real matrix such that $F \neq \lambda I$ for any $\lambda >0$. We want to show that $\vert F + M\vert =0$ for some $M =\begin{pmatrix}
e& f\\
g& h
\end{pmatrix}\in\Re^{2}$. First, note that by \eqref{eqn: det of sum}

$$\vert F + M\vert = \vert F\vert + \vert M\vert + (de-gb) +(ah-fc).$$ \\
We divide the proof into 3 cases:
\begin{enumerate}
\item If $c\neq 0$, take any $e,h\in \mathbb{R}$ with $e\neq h$ and $e,h> 0$. By setting $f= \frac{\vert F\vert + eh + de + ah}{c}$ and  $g=0$ we obtain $\vert M\vert >0$ and  $\vert F + M\vert = 0$. Furthermore, $M$  is triangular with distinct eigenvalues $e$ and $h$, hence $M$ is diagonalizable. Since $e,h> 0$, we get $M\in\Re^{2}$ by Theorem 2.2.
\item  If $c= 0$ and $b\neq 0$, using a similar argument as in 1. we obtain the same result by taking $f=0$, $g= \frac{\vert F\vert + eh + de + ah}{b}$ and any $e,h> 0$, $e\neq h$.
\item If $b=c=0$, note that by hypothesis $a\neq d$. Also, we have $a,d \geq 0$, $a,d<0$ or without loss of generality $a< 0$ and $d\geq 0$. For the second and third case, we can make $M$ diagonalizable with positive determinant and satisfying $\vert F + M\vert = 0$, by taking $e=h=-a$ and $f=g=0$. Hence, $M\in\Re^{2}$ by Theorem 2.2. For the first case assume without loss of generality $a>d \geq 0$ and consider the matrix:

$$M =\begin{pmatrix}
e& f\\
g& h
\end{pmatrix}=\begin{pmatrix}
\frac{ad}{a-d}+\lambda& \frac{d^{2}}{a-d}+\frac{a+d}{2a}\lambda\\
-\frac{a^{2}}{a-d}-\lambda& \frac{-ad}{a-d}-\frac{a+d}{2a}\lambda\\
\end{pmatrix}$$

with $\lambda>0$. Clearly, $$\vert F+M \vert=\begin{vmatrix}
\frac{a^{2}}{a-d}+\lambda& \frac{d^{2}}{a-d}+\frac{a+d}{2a}\lambda\\
-\frac{a^{2}}{a-d}-\lambda& -\frac{d^{2}}{a-d}-\frac{a+d}{2a}\lambda\\
\end{vmatrix}=0.$$
Since 
$\frac{d^{2}}{a-d}+\frac{a+d}{2a}\lambda=-d + \frac{ad}{a-d}+\frac{a+d}{2a}\lambda \quad \text{and} \quad\frac{ad}{a-d}+\lambda= -a + \frac{a^{2}}{a-d}+\lambda$ we have:
\begin{flalign*}
 \vert M\vert & =-\Big(-a + \frac{a^{2}}{a-d}+\lambda \Big)\Big(\frac{ad}{a-d}+\frac{a+d}{2a}\lambda\Big) + \Big(\frac{a^{2}}{a-d}+\lambda\Big)\Big(-d + \frac{ad}{a-d}+\frac{a+d}{2a}\lambda\Big)&\nonumber\\
    &=a\Big(\frac{ad}{a-d}+\frac{a+d}{2a}\lambda\Big)-d\Big(\frac{a^{2}}{a-d}+\lambda\Big)\nonumber\\
     &=\Big(\frac{a+d}{2}-d\Big)\lambda
     \nonumber\\
      &=\Big(\frac{a-d}{2}\Big)\lambda>0.
       \nonumber\\
\end{flalign*} 

Furthermore, $tr(M)=e+h=\frac{a-d}{2a}\lambda>0$. Hence, $ tr(M)^{2}-4|M|>0$  for big enough $\lambda$; that is, the eigenvalues of $M$ given by $$\dfrac{tr(M)\pm \sqrt{tr(M)^{2}-4|M|}}{2} \quad$$ 
 are both positive and different. Then $M$ is diagonalizable and belongs to $\Re^{2}$, by Theorem 2.2.
\end{enumerate}
\end{proof}
\begin{proposition}\label{prop:4 marginal counterexample} \textit{For $b(x,y,z,w)=x\cdot y + y\cdot z+z\cdot w+w\cdot F(x)$, with $(x,y,z,w)\in (\mathbb{R}^{2})^{4}$  and $F$ a linear map such that $F\neq \lambda I$ for any $\lambda >0$}, there are absolutely continuous marginals $\mu_{1}, \mu_{2}, \mu_{3}, \mu_{4}$ with respect to $\mathcal{L}^{2}$, such that a solution of \eqref{KPb} is not concentrated on a graph of a measurable map.
\end{proposition}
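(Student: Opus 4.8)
The plan is to exhibit an explicit primal--dual pair certifying that a plan supported on a $3$-dimensional linear subspace is optimal, and then to choose the marginals as images of that subspace. I would look for potentials of the purely quadratic form $u_k(v)=\tfrac12 v^{T}Q_k v$ with $Q_k$ symmetric positive definite, chosen so that the quadratic form $Q(x,y,z,w):=\sum_k u_k-b$ is positive semidefinite on $(\mathbb{R}^2)^4$ with a $3$-dimensional kernel $V$. Granting this, I would fix a bounded open $\Omega\subset V$, let $\mu$ be (normalized) $3$-dimensional Lebesgue measure on $\Omega$, viewed as a measure on $(\mathbb{R}^2)^4$, and set $\mu_k:=(\pi_k)_\#\mu$, where $\pi_k$ is the projection onto the $k$-th factor. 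Since $Q\ge 0$ everywhere while $Q\equiv 0$ on $V\supseteq\Omega$, the tuple $(u_1,\dots,u_4)$ is feasible for \eqref{DP} and $\sum_k u_k=b$ on $\operatorname{spt}\mu$; hence for every competitor $\mu'\in\Pi(\mu_1,\dots,\mu_4)$ one has $\int b\,d\mu'\le\sum_k\int u_k\,d\mu_k=\int b\,d\mu$, so $\mu$ solves \eqref{KPb}.

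To build $Q$, I would complete squares. Writing $b=y\cdot(x+z)+w\cdot(z+Fx)$ and applying Young's inequality to each summand gives $b\le\bigl(\tfrac12 y^{T}Q_2y+\tfrac12(x+z)^{T}Q_2^{-1}(x+z)\bigr)+\bigl(\tfrac12 w^{T}Q_4w+\tfrac12(z+Fx)^{T}Q_4^{-1}(z+Fx)\bigr)$, with equality exactly when $Q_2y=x+z$ and $Q_4w=z+Fx$. Thus $Q=T_1+T_2+T_3$, where $T_1,T_2\ge 0$ are the two Young gaps and $T_3$ is a quadratic form in $(x,z)$ alone; a direct computation gives $T_3(x,z)=\tfrac12 x^{T}Sx+\tfrac12 z^{T}Rz-x^{T}B^{T}z$ with $B=Q_2^{-1}+Q_4^{-1}F$, $S=Q_1-Q_2^{-1}-F^{T}Q_4^{-1}F$ and $R=Q_3-Q_2^{-1}-Q_4^{-1}$, where $S,R$ are at my disposal (for symmetric $S,R\succeq 0$ the resulting $Q_1,Q_3$ are positive definite). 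The crucial idea is to arrange that $B$ has rank $\le 1$: writing $B=\beta s^{T}$ and taking $S=ss^{T}$, $R=\beta\beta^{T}$ turns $T_3$ into $\tfrac12(s^{T}x-\beta^{T}z)^2\ge 0$, so $Q\ge 0$ and
\[
V=\ker Q=\{(x,y,z,w):\ s^{T}x=\beta^{T}z,\ \ Q_2y=x+z,\ \ Q_4w=z+Fx\},
\]
which is exactly $3$-dimensional: it is parametrized by $(x,z)$ subject to one linear equation, with $y,w$ then determined.

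Making $B=Q_2^{-1}+Q_4^{-1}F$ of rank $\le 1$ is equivalent to $\det(F+Q_4Q_2^{-1})=0$, and $Q_4Q_2^{-1}$ is precisely a product of two positive definite matrices, i.e.\ an element of $\Re^{2}$. Since $F\ne\lambda I$ for $\lambda>0$, the preceding lemma supplies $M\in\Re^{2}$ with $F+M$ singular; factoring $M=Q_4Q_2^{-1}$ with $Q_2,Q_4\succ 0$ and reading off $\beta,s$ from $F+Q_4Q_2^{-1}=Q_4\beta s^{T}$ produces the potentials. A short genericity check --- that $\beta$, $s$, $s+\beta$ and $s+F^{T}\beta$ are all nonzero, which can be arranged by choosing $M$ suitably within the family produced by the lemma (perturbing if necessary) --- ensures that each projection $\pi_k\colon V\to\mathbb{R}^2$ is onto. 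Then each $\mu_k=(\pi_k)_\#\mu$ is the pushforward of a measure equivalent to $3$-dimensional Lebesgue measure under a surjective linear map onto $\mathbb{R}^2$, hence absolutely continuous with respect to $\mathcal{L}^2$, and one may take $X_k$ to be any bounded open set containing $\pi_k(\Omega)$. Finally $\mu$ is not induced by a map: for $\mu_1$-a.e.\ $x_0$ the fiber of $V$ over $x_0$ is the line $\{z:\beta^{T}z=s^{T}x_0\}$ (with $y,w$ determined by $x_0,z$), and the conditional of $\mu$ on this line is one-dimensional, hence not a Dirac mass; in particular $\mu$ concentrates on the $3$-dimensional set $\Omega$.

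The heart of the argument --- and the one genuinely delicate point --- is the linear-algebra step: producing $Q\succeq 0$ whose kernel has dimension exactly $3$ while all four coordinate projections of that kernel stay surjective. The dimension count forces $F+Q_4Q_2^{-1}$ to be singular for some positive definite $Q_2,Q_4$, which is possible precisely when $F$ is not a positive multiple of the identity (this is the preceding lemma, and its converse is what makes the dichotomy with Theorem 3.1 sharp); and the surjectivity of the projections is why one must check that the rank-one factorization and the vectors $\beta,s$ can be chosen in general position, since a careless choice either makes $V$ project to a line in some factor (forcing the corresponding marginal to be singular) or collapses $V$ onto a graph over $x$.
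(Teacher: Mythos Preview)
Your argument is correct and shares the paper's overall strategy: build quadratic potentials so that $Q=\sum_k u_k-b$ is positive semidefinite on $(\mathbb{R}^2)^4$ with a three-dimensional kernel, reduce the algebraic obstruction to the singularity of $F+M$ for some $M\in\Re^{2}$ (precisely the preceding lemma), and take $\mu$ diffuse on that kernel with marginals the coordinate projections. The implementations differ in which pair of variables is eliminated first. The paper optimizes out $w$ and then $z$ by iterated Legendre transforms, producing an effective two-marginal surplus $\rho(x,y)=A^{T}y\cdot x_{\perp}+\text{(separable)}$ with $A=M^{-1}F+I$ singular, and closes with Cauchy--Schwarz; you instead use the grouping $b=y\cdot(x+z)+w\cdot(z+Fx)$ to eliminate $y$ and $w$ simultaneously via Young's inequality, leaving a rank-one cross term in $(x,z)$ that you complete to the square $\tfrac12(s^{T}x-\beta^{T}z)^2$. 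This yields a different (but equally valid) choice of potentials and a different three-dimensional equality set. Your presentation is a bit more transparently linear-algebraic, while the paper's stays closer to the $b$-conjugate/dual formalism; neither gains extra generality. Your treatment of the surjectivity of the projections (the nonvanishing of $\beta$, $s$, $s+\beta$, $s+F^{T}\beta$, handled by choosing within the one-parameter family of $M$'s the lemma actually produces) is at the same level of rigor as the paper's own ``it is straightforward to show'' remark.
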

\begin{proof}
Let  $F =\begin{pmatrix}
a& b\\
c& d
\end{pmatrix}
$ be the matrix representation of $F(x)$. By the previous lemma we can choose $M =\begin{pmatrix}
e& f\\
g& h
\end{pmatrix}
\in\Re^{2}$ such that $F+M$ is singular. Note that $A:=M^{-1}F+I$ is also singular and $M^{-1}\in \Re^{2}$. Let $M_{1}, M_{2}>0$ be such that $M^{-1}=M_{1}M_{2}$ and $v\in \mathbb{R}^{2}$ a nonzero vector satisfying $Av=0$. Decompose each vector $x\in \mathbb{R}^{2}$ into orthogonal components $x=x_{\perp} + x_{\parallel}$ with $x_{\perp} \perp v$ and $x_{\parallel} \parallel v$. For all $x,y,z,w$ define:
\begin{enumerate}[label=\roman*.]
\item  $u_{1}(x)=\frac{|x_{\perp}|^{2}}{2}+g_{1}(x) + g_{2}(x), \quad \quad u_{2}(y)= \frac{|A^{T}y|^{2}}{2}+g(y),\quad$ where \smallskip

$g_{1}(x)=\frac{1}{2}(M_{2}Fx)^{T}M_{1}(M_{2}Fx),\quad g_{2}(x)=\frac{1}{2}(Fx)^{T}M_{2}(Fx) \quad \text{and} \quad g(y)=\frac{1}{2}y^{T}M_{1}y$. 

\item $u_{3}(z)= \frac{1}{2}z^{T}(M_{1}^{-1} + M_{2})z, \quad u_{4}(w)= \frac{1}{2}w^{T}M_{2}^{-1}w $.

\item $\rho(x,y)=\sup_{z,w}[ b(x,y,z,w)-u_{3}(z)-u_{4}(w)]$.
\end{enumerate}
Consider the set: $$W=\Big\{ (x,y,z,w): x_{\perp}=A^{T}y, z=M_{1}(y+M_{2}Fx)\quad\text{and}\quad w=M_{2}(z+Fx)\Big\}$$
 We claim 
 \begin{equation}\label{eqn: potential inequality}
 b(x,y,z,w)- u_{1}(x)-u_{2}(y)-u_{3}(z)-u_{4}(w)\leq 0
 \end{equation}
  for all $(x,y,z,w)\in (\mathbb{R}^{2})^{4}$ and equality holds on $W$. For the inequality, it suffices to prove $ \rho(x,y)\leq u_{1}(x) + u_{2}(y)$.
\begin{flalign*}
\rho(x,y)& =x\cdot y + \sup_{z,w}[ y\cdot z+z\cdot w+w\cdot (Fx)-u_{3}(z)-u_{4}(w)]&\nonumber\\
    &=x\cdot y + \sup_{z}\big[ y\cdot z -u_{3}(z)+ \sup_{w}\big[(z+Fx)\cdot w -u_{4}(w)\big]\big]\nonumber\\
     &=x\cdot y + \sup_{z}\big[ y\cdot z -u_{3}(z)+ u_{4}^{*}(z+Fx)\big]
     \nonumber\\
      &=x\cdot y + \sup_{z}\big[ y\cdot z -u_{3}(z)+ \frac{1}{2}(z+Fx)^{T}M_{2}(z+Fx)\big]
      \qquad \qquad \text {by  \eqref{Lf}} \\
       &=x\cdot y + \sup_{z}\big[ y\cdot z - \frac{1}{2}z^{T}M_{1}^{-1}z+z^{T}M_{2}Fx\big]+ g_{2}(x)
       \nonumber\\ 
 &=x\cdot y +  \frac{1}{2}(y+M_{2}Fx)^{T}M_{1}(y+M_{2}Fx) + g_{2}(x)
\qquad\qquad\qquad\quad\text {by  \eqref{Lf}} \\
  &= x\cdot y + y^{T}M_{1}M_{2}Fx + g_{1}(x) + g_{2}(x) + g(y) 
   \nonumber\\
  &= y^{T}\cdot Ax + g_{1}(x) + g_{2}(x) + g(y)
 \nonumber\\
  &= y^{T}\cdot Ax_{\perp}  + g_{1}(x) + g_{2}(x) + g(y)
    \nonumber\\
    &= A^{T}y\cdot x_{\perp}  + g_{1}(x) + g_{2}(x) + g(y)
      \nonumber\\
    &\leq \frac{|A^{T}y|^{2}}{2}+ \frac{| x_{\perp} |^{2}}{2} + g_{1}(x) + g_{2}(x) + g(y)
     \qquad \qquad\quad\text{by the Cauchy-Schwarz Inequality}\\
  &= u_{1}(x) + u_{2}(y),
\end{flalign*}
with equality when $x_{\perp}=A^{T}y$. Hence, for any element $(x_{0},y_{0},z_{0},w_{0})$ in $W$, $\rho(x_{0},y_{0})=u_{1}(x_{0}) + u_{2}(y_{0})$. Furthermore, by tracing the cases of equality in the preceding string of inequalities, it is not hard to show that $(z_{0},w_{0})$ maximizes the map $(z,w)\mapsto  y_{0}\cdot z+z\cdot w+w\cdot (Fx_{0})-u_{3}(z)-u_{4}(w)$. Then $b(x_{0},y_{0},z_{0},w_{0})-u_{3}(z_{0})-u_{4}(w_{0})=\rho(x_{0},y_{0})$; that is $b(x_{0},y_{0},z_{0},w_{0})-u_{3}(z_{0})-u_{4}(w_{0})= u_{1}(x_{0}) + u_{2}(y_{0})$ on $W$, proving the claim.

Since $x_{\parallel}$ and $y$ can be chosen freely, $dim(W)= 3$. Then, if we take any probability measure $\mu$, concentrated on $W$ and absolutely continuous with respect to  3-dimensional Hausdorff measure, $spt(\mu)$ will not be concentrated on the graph of a measurable map. Now, take the projections of $\mu$ as marginals; that is, set $\mu_{1}=(\pi_{x})_{\sharp}\mu$, $\mu_{2}=(\pi_{y})_{\sharp}\mu$, $\mu_{3}=(\pi_{z})_{\sharp}\mu$ and $\mu_{4}=(\pi_{w})_{\sharp}\mu$). It is straightforward to show that $M_{1}$ and $M_{2}$ can be chosen so that the canonical projections $\pi_{x}$, $\pi_{y}$, $\pi_{z}$, $\pi_{w}$ from $W$ are surjective, and so
the marginals $\mu_{k}$ are absolutely continuous with respect to Lebesgue measure. Hence, inequality \eqref{eqn: potential inequality} together with the fact that equality holds $\mu$ almost everywhere implies that  $\mu$ is a solution to \eqref{KPb} which is not induced by a map.
\end{proof} 
\subsection{The condition $m=4$.}
In this subsection we show that the hypothesis on the numbers of variables in Theorem 3.1 is necessary. We will follow the ideas behind the proof of Proposition 4.2.1. 

In what follows, the presented variables are in $\mathbb{R}^{2}$. For a given $x\in\mathbb{R}^{2}$ its coordinates will be denoted by $x^{1}$, $x^{2}$.
\begin{proposition}
For $b(x_{1},\ldots, x_{m})=\sum_{k=1}^{m-1}x_{k}\cdot x_{k+1}+x_{m}\cdot x_{1}$, $m\geq5$, there are absolutely continuous marginals $\mu_{k}$ with respect to the Lebesgue measure $\mathcal{L}^{2}$,  such that a solution of \eqref{KPb} is not concentrated on a graph.
\end{proposition}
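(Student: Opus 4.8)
The plan is to imitate the construction in the proof of Proposition \ref{prop:4 marginal counterexample}, the one genuinely new feature being that a longer cyclic chain lets a key matrix be drawn from a strictly larger class. Throughout the variables lie in $\mathbb{R}^{2}$ and $F=I$; only the length of a chain of Legendre transforms will depend on $m$.

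First I would fix quadratic dual potentials for $x_3,\dots,x_m$: choose positive definite $2\times2$ matrices $M_1,\dots,M_{m-2}$ and set $u_m(x_m)=\tfrac12 x_m^{T}M_{m-2}^{-1}x_m$ and $u_k(x_k)=\tfrac12 x_k^{T}(M_{k-2}^{-1}+M_{k-1})x_k$ for $3\le k\le m-1$ (each of these is a bona fide positive definite quadratic). Eliminating $x_m,x_{m-1},\dots,x_3$ one at a time in $\rho(x_1,x_2):=\sup_{x_3,\dots,x_m}[\,b-u_3-\cdots-u_m\,]$ via \eqref{Lf}, each intermediate supremum is automatically finite — the quadratic part in the variable being removed always reduces to $-\tfrac12 x_k^{T}M_{k-2}^{-1}x_k$, which is negative definite — and the computation telescopes exactly as in the case $m=4$, yielding
\[
\rho(x_1,x_2)=x_2^{T}(I+N)x_1+q_1(x_1)+\tfrac12 x_2^{T}M_1x_2,\qquad N:=M_1M_2\cdots M_{m-2},
\]
with $q_1$ a quadratic form in $x_1$ alone. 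The essential point is that $N$ may be \emph{any} product of $m-2$ positive definite $2\times2$ matrices, i.e.\ $N\in\Re^{m-2}$. For $m=4$ the analogous matrix lay in $\Re^{2}$, whose elements have positive eigenvalues, which is precisely why $I+N$ could not be made singular there and the hypothesis $F\ne\lambda I$ had to be imposed. For $m\ge5$ I instead need only $N\in\Re^{m-2}$ with $I+N$ singular; since $\Re^{3}\subseteq\Re^{4}\subseteq\cdots$ (pad a product of $d$ factors by $P^{1/2}P^{1/2}$) and $m-2\ge3$, it suffices to produce such an $N$ inside $\Re^{3}$. An eigenvalue $-1$ together with $|N|>0$ forces the other eigenvalue of $N$ to be negative, hence $tr(N)<0$, so by the second item of Theorem 2.2 I must arrange $(c-b)^{2}>4|N|$: an upper triangular $N$ with diagonal entries $-1$ and $-\lambda$ ($\lambda>0$, $\lambda\ne1$) and a sufficiently large off-diagonal entry does it, after which $N\in\Re^{3}\subseteq\Re^{m-2}$, the factorization $N=M_1\cdots M_{m-2}$ with $M_i$ positive definite exists, and $A:=I+N$ is singular of rank $1$.

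With $N$ and the factors $M_i$ fixed I would finish choosing the potentials exactly as in items (i)--(iii) of the proof of Proposition \ref{prop:4 marginal counterexample}: let $v\ne0$ span $\ker A$, write $x_1=x_{1,\perp}+x_{1,\parallel}$ with $x_{1,\perp}\perp v$, observe $Ax_1=Ax_{1,\perp}$ and (since $Av=0$) $A^{T}x_2\perp v$, and put $u_2(x_2)=\tfrac12|A^{T}x_2|^{2}+\tfrac12 x_2^{T}M_1x_2$ and $u_1(x_1)=\tfrac12|x_{1,\perp}|^{2}+q_1(x_1)$. Then $\rho(x_1,x_2)=A^{T}x_2\cdot x_{1,\perp}+q_1(x_1)+\tfrac12 x_2^{T}M_1x_2\le u_1(x_1)+u_2(x_2)$ by Cauchy--Schwarz, with equality exactly when $x_{1,\perp}=A^{T}x_2$ and $x_3,\dots,x_m$ are the (unique) maximizers in the suprema. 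Hence $\sum_{k=1}^{m}u_k\ge b$ on all of $(\mathbb{R}^{2})^{m}$, with equality on the linear set $W=\{(x_1,\dots,x_m):x_{1,\perp}=A^{T}x_2,\ x_k=L_k(x_1,x_2)\ (3\le k\le m)\}$, the $L_k$ being the affine maps prescribing those maximizers. Since $A$ is singular, $x_{1,\parallel}\in\mathbb{R}$ is unconstrained while $x_2\in\mathbb{R}^{2}$ is free, so $\dim W=3$; I would then take $\mu$ a probability measure on a bounded portion of $W$, absolutely continuous with respect to $\mathcal{H}^{3}$, set $\mu_k=(\pi_{x_k})_{\#}\mu$, and conclude from $\sum_k u_k=b$ $\mu$-a.e.\ and the duality of Theorem 2.1 that $\mu$ solves \eqref{KPb}, while $\dim W=3>2$ prevents $\mu$ from being concentrated on a graph.

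The step I expect to require genuine care — exactly the analogue of the verifications left as ``straightforward'' at the end of the proof of Proposition \ref{prop:4 marginal counterexample} — is ensuring that each $\mu_k$ is absolutely continuous with respect to $\mathcal{L}^{2}$, for which it suffices that every coordinate projection $\pi_{x_k}\colon W\to\mathbb{R}^{2}$ be surjective. This is automatic for $\pi_{x_1}$ (the image of $A^{T}$ is $v^{\perp}$ and the $x_{1,\parallel}$-direction supplies the complementary line) and for $\pi_{x_2}$; for $k\ge3$ the projection is an explicit affine map of $(x_{1,\parallel},x_2)\in\mathbb{R}^{3}$ depending on the still-free parameters (the factors $M_i$, the number $\lambda$, the size of the off-diagonal entry of $N$), and one checks it can be made of rank $2$. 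The conceptual heart, by contrast, is short: lengthening the cyclic chain from $m=4$ to $m\ge5$ replaces $\Re^{2}$ by $\Re^{3}$ (or larger), and by the second item of Theorem 2.2 that class already contains a matrix $N$ with $I+N$ singular, so no assumption on $F$ beyond $F=I$ is needed.
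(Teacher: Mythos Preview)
Your proposal is correct and follows essentially the same route as the paper: both arguments hinge on the fact that for $m\ge5$ the product $N=M_1\cdots M_{m-2}$ ranges over $\Re^{m-2}\supseteq\Re^{3}$, which (unlike $\Re^{2}$) contains matrices with $I+N$ singular, and both then build quadratic dual potentials whose equality set $W$ is $3$-dimensional. The only differences are presentational: the paper fixes the explicit choice $N=\begin{pmatrix}-1&3\\0&-1\end{pmatrix}$ and, for $m>5$, takes $M_4=\cdots=M_{m-2}=I$ (handling the extra variables by a short induction), whereas you keep the $M_i$ general and absorb the case $m>5$ into the inclusion $\Re^{3}\subseteq\Re^{m-2}$; your formulation is slightly cleaner in avoiding the case split, while the paper's is more concrete.
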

\begin{proof}
By part 2 of Theorem 2.2, $M =\begin{bmatrix}
-1& 3\\
0& -1
\end{bmatrix}\in \mathbb{\Re}^{3}$. Hence, we can choose positive definite matrices $M_{1}, M_{2}, M_{3} > 0$ such that $M=M_{1}M_{2}M_{3}$. For all $x_{1},\ldots, x_{m} $ define:
\begin{enumerate}[label=\roman*.]
\item  $u_{1}^{m}(x_{1})=\frac{3(x_{1}^{2})^{2}}{2}+g_{1}(x_{1}) + g_{2}^{m}(x_{1}), \quad $ where \quad $g_{1}(x_{1})=\frac{1}{2}(M_{3}x_{1})^{T}M_{2}(M_{3}x_{1})+\frac{1}{2}(M_{2}M_{3}x_{1})^{T}M_{1}(M_{2}M_{3}x_{1})$ \quad and \quad $g_{2}^{m}(x_{1})=\frac{1}{2}x_{1}^{T}M_{3}x_{1} +\frac{m-5}{2}|x_{1}|^{2}$ \quad for all $m\geq 5$.

\item $u_{2}(x_{2})= \frac{3(x_{2}^{1})^{2}}{2}+g(x_{2})$ \quad with \quad $g(x_{2})=\frac{1}{2}x_{2}^{T}M_{1}x_{2}$,\quad $u_{3}(x_{3})= \frac{1}{2}x_{3}^{T}(M_{1}^{-1} + M_{2})x_{3}$\quad and  \quad $u_{4}(x_{4})= \frac{1}{2}x_{4}^{T}(M_{2}^{-1}+M_{3})x_{4} $\quad for all $m\geq 5$.

\item $u_{5}(x_{5})=  
		\frac{1}{2}x_{5}^{T}M_{3}^{-1}x_{5}$ for  $m=5$.

\item For $m>5$,  $u_{k}(x_{k})=  \left\{
	\begin{array}{ll}
	\frac{1}{2}x_{5}^{T}(M_{3}^{-1}+I)x_{5} & \mbox{if } k=5\\
		|x_{k}|^{2}  & \mbox{if } 5<k<m \\
		\frac{1}{2}|x_{m}|^{2} & \mbox{if } k=m\\

	\end{array}
\right.$
\item $\rho^{m}(x_{1},x_{2})=\sup_{x_{3},..., x_{m}}\big [b(x_{1},\ldots, x_{m})-\sum_{k=3}^{m}u_{k}(x_{k})\big ]$ for all $m\geq 5$.
\end{enumerate}
Consider the set:

\begin{align*}
W&=\Big\{ (x_{1},\ldots, x_{m}): x_{1}^{2}=x_{2}^{1},\quad x_{3}=M_{1}(x_{2}+M_{2}M_{3}x_{1}), \quad x_{4}=M_{2}(x_{3}+M_{3}x_{1}),\\ 
& \qquad\qquad\qquad\quad x_{5}=M_{3}(x_{4}+x_{1})\quad \text{and}\quad x_{k}=x_{1}+x_{k-1},\quad\text{for} \quad k\geq 6\Big\}
\end{align*}

We claim $b(x_{1},\ldots, x_{m})-\sum_{k=3}^{m}u_{k}(x_{k})\leq u_{1}^{m}(x_{1}) + u_{2}(x_{2})$ for all $(x_{1},\ldots, x_{m})\in (\mathbb{R}^{2})^{m}$ and equality holds on $W$. For the inequality, it suffices to prove $\rho^{m}(x_{1},x_{2})\leq u_{1}^{m}(x_{1}) + u_{2}(x_{2})$ for all $m\geq 5$. We divide the proof of the claim into two cases:
\begin{enumerate}
\item For m=5
\begin{flalign*}
 \rho^{5}(x_{1},x_{2})& = x_{1}\cdot x_{2} + \sup_{x_{3},x_{4},x_{5}}[ x_{2}\cdot x_{3}+x_{3}\cdot x_{4}+x_{4}\cdot x_{5} +x_{5}\cdot x_{1}-u_{3}(x_{3})-u_{4}(x_{4})-u_{5}(x_{5})]\nonumber\\
    &=x_{1}\cdot x_{2} + \sup_{x_{3}}\Big[ x_{2}\cdot x_{3} -u_{3}(x_{3})+ \sup_{x_{4}}\big[x_{3}\cdot x_{4}-u_{4}(x_{4})\nonumber\\
     &\quad +\sup_{x_{5}}[(x_{4}+x_{1})x_{5}-u_{5}(x_{5})]\big]\Big]\nonumber\\
     &=x_{1}\cdot x_{2} + \sup_{x_{3}}\big[ x_{2}\cdot x_{3} -u_{3}(x_{3})+ \sup_{x_{4}}[x_{3}\cdot x_{4} -u_{4}(x_{4})+u_{5}^{*}(x_{4}+x_{1})]\big]\nonumber\\
      &=x_{1}\cdot x_{2} + \sup_{x_{3}}\big[ x_{2}\cdot x_{3} -u_{3}(x_{3})+ \sup_{x_{4}}[x_{3}\cdot x_{4} -u_{4}(x_{4})\\
      &\quad +\frac{1}{2}(x_{4}+x_{1})^{T}M_{3}(x_{4}+x_{1})]\big]\quad\qquad\qquad\qquad\qquad\qquad\quad\text {by  \eqref{Lf}} \\
      &=x_{1}\cdot x_{2} + \sup_{x_{3}}\big[ x_{2}\cdot x_{3} -u_{3}(x_{3})+ \sup_{x_{4}}[x_{3}\cdot x_{4} -\frac{1}{2}x_{4}^{T}M_{2}^{-1}x_{4}\\
      &\quad +x_{4}^{T}M_{3}x_{1}+g_{2}^{5}(x_{1})]\big]\\
      &=x_{1}\cdot x_{2} + \sup_{x_{3}}\big[ x_{2}\cdot x_{3} -u_{3}(x_{3})+\frac{1}{2}(x_{3}+M_{3}x_{1})^{T}M_{2}(x_{3}+M_{3}x_{1})\big]\\
      & \quad +g_{2}^{5}(x_{1}) \quad\qquad\qquad\qquad\qquad\qquad\qquad\qquad\qquad\qquad\qquad\text{by \eqref{Lf}}\\      
       &=x_{1}\cdot x_{2} + \sup_{x_{3}}\big[ x_{2}\cdot x_{3}-\frac{1}{2}x_{3}^{T}M_{1}^{-1}x_{3} + (M_{3}x_{1})^{T}M_{2}x_{3}\big] +g_{2}^{5}(x_{1})\\
       &\quad + \frac{1}{2}(M_{3}x_{1})^{T}M_{2}(M_{3}x_{1})\\
       &=x_{1}\cdot x_{2} + \sup_{x_{3}}\big[ x_{2}\cdot x_{3}-u_{3}(x_{3})+ \frac{1}{2}(x_{3}+M_{3}x_{1})^{T}M_{2}(x_{3}+M_{3}x_{1})\big] +g_{2}^{5}(x_{1})\nonumber\\
 &=x_{1}\cdot x_{2} +\frac{1}{2}(x_{2}+M_{2}M_{3}x_{1})^{T}M_{1}(x_{2}+M_{2}M_{3}x_{1})+g_{2}^{5}(x_{1})\nonumber\\
 &\quad +\frac{1}{2}(M_{3}x_{1})^{T}M_{2}(M_{3}x_{1})\quad\qquad\qquad\qquad\qquad\qquad\qquad\qquad\text {by  \eqref{Lf}}\nonumber\\
  &= x_{1}\cdot x_{2} + x_{2}^{T}Mx_{1} + g_{1}(x_{1}) + g_{2}^{5}(x_{1}) + g(x_{2}) 
   \nonumber\\
   &= 3x_{1}^{2} x_{2}^{1} + g_{1}(x_{1}) + g_{2}^{5}(x_{1}) + g(x_{2}) 
   \nonumber\\
   &\leq \frac{3(x_{1}^{2})^{2}}{2} + \frac{3(x_{2}^{1})^{2}}{2}+ g_{1}(x_{1}) + g_{2}^{5}(x_{1}) + g(x_{2})  \qquad\text{by the Cauchy-Schwarz Ineq.}\\
  &= u_{1}^{5}(x_{1}) + u_{2}(x_{2})
\end{flalign*}
\item The case $m\geq 6$ will be proved using induction. For $m=6$, note that:
$$x_{4}\cdot x_{5} -u_{5}(x_{5})+\sup_{x_{6}}[(x_{5}+x_{1})x_{6}-u_{6}(x_{6})]=(x_{1}+x_{4})x_{5}-\frac{1}{2}x_{5}^{T}M_{3}^{-1}x_{5}+\frac{1}{2}|x_{1}|^{2}$$
 and $$\sup_{x_{5}}[(x_{1}+x_{4})x_{5}-\frac{1}{2}x_{5}^{T}M_{3}^{-1}x_{5}+\frac{1}{2}|x_{1}|^{2}]= \frac{1}{2}(x_{4}+x_{1})^{T}M_{3}(x_{4}+x_{1})+\frac{1}{2}|x_{1}|^{2}.$$
Then
\begin{flalign*}
\rho^{6}(x_{1},x_{2})&=\rho^{5}(x_{1},x_{2}) + \frac{1}{2}|x_{1}|^{2}&\nonumber\\
&\leq u_{1}^{5}(x_{1}) + u_{2}(x_{2})+ \frac{1}{2}|x_{1}|^{2}\quad\qquad\qquad\qquad\qquad\qquad\qquad\text {(13)}&\nonumber\\
&=u_{1}^{6}(x_{1}) + u_{2}(x_{2})
\end{flalign*}

Assume the statement is true for $m-1$. Then

\begin{flalign*}
 \rho^{m}(x_{1},x_{2})& = \sup_{x_{3},..., x_{m}}\Big [\sum_{k=1}^{m-1}x_{k}\cdot x_{k+1}+x_{m}\cdot x_{1}-\sum_{k=3}^{m}u_{k}(x_{k})\Big ]&\nonumber\\
     &= \sup_{x_{3},..., x_{m-1}}\Big [\sum_{k=1}^{m-2}x_{k}\cdot x_{k+1}-\sum_{k=3}^{m-1}u_{k}(x_{k})+ \sup_{x_{m}}\big[(x_{1}+x_{m-1})x_{m}-u_{m}(x_{m})\big]\Big ]\nonumber\\
       &=\sup_{x_{3},..., x_{m-1}}\Big [\sum_{k=1}^{m-2}x_{k}\cdot x_{k+1}+\frac{|x_{1}+x_{m-1}|^{2}}{2}-\sum_{k=3}^{m-1}u_{k}(x_{k}) \Big ]\qquad\text{by  \eqref{Lf}}\nonumber\\  
 &=\sup_{x_{3},..., x_{m-1}}\Big [b(x_{1},\ldots, x_{m-1})-\sum_{k=3}^{m-2}u_{k}(x_{k})- \frac{|x_{m-1}|^{2}}{2}+\frac{|x_{1}|^{2}}{2}\Big ]\nonumber\\
  &= \rho^{m-1}(x_{1},x_{2}) +\frac{|x_{1}|^{2}}{2}\quad\qquad\qquad\qquad\qquad\qquad\qquad\qquad\text {(14)}
   \nonumber\\
   &\leq u_{1}^{m-1}(x_{1}) + u_{2}(x_{2}) +\frac{|x_{1}|^{2}}{2} \quad\qquad\qquad\qquad\qquad\text {by induction hypothesis}\\
  &= u_{1}^{m}(x_{1}) + u_{2}(x_{2}).
\end{flalign*}

If  $x_{1}^{2}=x_{2}^{1}$, we obtain $\rho^{5}(x_{1},x_{2})= u_{1}^{5}(x_{1}) + u_{2}(x_{2})$ and by (13), $\rho^{6}(x_{1},x_{2})=u_{1}^{6}(x_{1}) + u_{2}(x_{2})$. Furthermore, by (14) $\rho^{m}(x_{1},x_{2})=\rho^{m-1}(x_{1},x_{2}) +\frac{|x_{1}|^{2}}{2}$. Hence, using induction we can easily prove that $\rho^{m}(x_{1},x_{2})= u_{1}^{m}(x_{1}) + u_{2}(x_{2})$ for all $m\geq 5$, when $x_{1}^{2}=x_{2}^{1}$.\\
On the other hand, for any element $(x_{1}^{0}, \ldots, x_{m}^{0})$ in $W$, $(x_{3}^{0}, \ldots, x_{m}^{0})$ maximizes the map: $$(x_{3}, \ldots, x_{m})\mapsto x_{2}^{0}\cdot x_{3}+\sum_{k=3}^{m-1}x_{k}\cdot x_{k+1}+x_{m}\cdot x_{1}^{0}-\sum_{k=3}^{m}u_{k}(x_{k}).$$ Hence $b(x_{1}^{0}, \ldots, x_{m}^{0})-\sum_{k=3}^{m}u_{k}(x_{k}^{0})=\rho^{m}(x_{1}^{0},x_{2}^{0})$= $u_{1}^{m}(x_{1}^{0}) + u_{2}(x_{2}^{0})$. This proves the claim.

The rest of the proof is identical to the proof of Proposition \ref{prop:4 marginal counterexample}; since $x_1$ and $x_2^2$ can be chosen freely, $W$ is three dimensional, and the claim implies that any probability measure $\mu$ supported on $W$ is optimal for its marginals in \eqref{KPb}.
 \end{enumerate}

\end{proof}

\end{document}